\newtheorem{theorem}{Theorem}[section]
\newtheorem{definition}{Definition}[section]
\newtheorem{proposition}[theorem]{Proposition}
\newtheorem{corollary}[theorem]{Corollary}
\newtheorem{remark}{Remark}
\title{\bf Fractional Poisson random sum and its associated normal variance mixture}
\author{Gabriela Oliveira$^\S$\footnote{Email: \href{mailto:gabriela.oliveira.mat@gmail.com}{gabriela.oliveira.mat@gmail.com}},\,\,  Wagner Barreto-Souza$^\P$\footnote{Corresponding author. Email: \href{mailto:wagner.barretosouza@kaust.edu.sa}{wagner.barretosouza@kaust.edu.sa}}\,\, and Roger W.C. Silva$^\S$\footnote{Email: \href{mailto:rogerwcs@est.ufmg.br}{rogerwcs@est.ufmg.br}} \\\\
\small $^\S$\it Departamento de Estat\' \i stica, Universidade Federal de Minas Gerais, Belo Horizonte, Brazil\\
\small $^\P$\it Statistics Program, King Abdullah University of Science and Technology, Thuwal, Saudi Arabia}
\date{}
\begin{document}
\sloppy

\maketitle

\begin{abstract}
In this work, we study the partial sums of independent and identically distributed random variables with the number of terms following a fractional Poisson (FP) distribution. The FP sum contains the Poisson and geometric summations as particular cases. We show that the weak limit of the FP summation, when properly normalized, is a mixture between the normal and Mittag-Leffler distributions, which we call by Normal-Mittag-Leffler (NML) law. A parameter estimation procedure for the NML distribution is developed and the associated asymptotic distribution is derived. Simulations are performed to check the performance of the proposed estimators under finite samples. An empirical illustration on the daily log-returns of the Brazilian stock exchange index (\texttt{IBOVESPA}) shows that the NML distribution captures better the tails than some of its competitors. Related problems such as a mixed Poisson representation for the FP law and the weak convergence for the Conway-Maxwell-Poisson random sum are also addressed.  \\

\noindent {\bf Keywords:} Estimation, Fractional Poisson distribution, Log-returns; Mittag-Leffler distribution, Weak convergence.\\

\noindent 2020 Mathematical Subject Classification:  60F05, 62E20, 62Fxx.
\end{abstract}

\section{Introduction}\label{intro}

The sum of a random number of random variables constitutes a very important tool in statistical analysis. Besides being interesting from a probabilistic point of view, it also appears in a wide range of applications involving processes that evolve with time such as insurance models, queuing theory, finance, reliability theory, biology, among others. See \cite{gk} and \cite{k} for a comprehensive list of such applications.  

Let $N$ be a non-negative integer-valued random variable and $\{W_j\}_{j\in\mathbb{N}}$ a sequence of independent and identically distributed (${\it i.i.d.}$) random variables, independent of $N$. Then, a random summation is given by 
\begin{equation}\label{random_sum}
Y=\sum_{j=1}^{N}W_j,
\end{equation} 
where $Y\equiv 0$ when $N=0$. An important example of random sum is the compound Poisson distribution, which is obtained by taking $N$ following a Poisson distribution with mean $\zeta>0$ in (\ref{random_sum}). In this case, it can be shown that (\ref{random_sum}) properly normalized  weakly converges to the standard normal distribution as $\zeta \rightarrow \infty$. 

The geometric random sum, obtained by assuming $N$ geometrically distributed (with mean $1/p$ for $p\in(0,1)$) in (\ref{random_sum}) was studied for instance by \cite{ren1956}, \cite{kozrac1994}, \cite{kozrac1999}, \cite{kotetal2001}, \cite{k} and \cite{jorkok2011}. By taking $p\rightarrow0^+$, (\ref{random_sum}) properly normalized converges in distribution to the Laplace law. Generalizations of the geometric random summation were proposed and studied by assuming that $N$ belongs to the class of mixed Poisson distributions \citep{karxek2005}, where the weak limits are normal variance mixture distributed \citep{baretal1982}. Contributions on this direction are due to \cite{korshe2012}, \cite{korzei2016}, \cite{st}, \cite{she2018}, and \cite{olietal2020}, just to name a few.

Our chief goal in this paper is to study the random sum in (\ref{random_sum}) when $N$ follows a fractional Poisson (FP) distribution. In this case, we refer to (\ref{random_sum}) as {\it fractional Poisson summation}. The FP distribution is a generalization of the Poisson law introduced by \cite{repsai2000}. This count model is obtained as the marginal distribution of a renewal process with Mittag-Leffler \citep{p} waiting times, the so-called fractional Poisson process. \cite{l} showed that the fractional Poisson process captures the long-memory effect which results in non-exponential waiting time distribution empirically observed in complex systems. Some applications of the fractional Poisson process, such as quantum physics and combinatorial number theory, are presented in \cite{l2}. Additional works exploring the FP process are due to \cite{maietal2004}, \cite{cahetal2010}, \cite{meeetal2011}, \cite{leoetal2017}, \cite{mahvel2019}, among others.

Random sums involving the fractional Poisson distribution/process have already been studied in the literature. We refer the reader to the works by \cite{sca2011}, \cite{begmac2013}, \cite{begmac2014}, and \cite{biasau2014}, where stochastic properties and representations are obtained. Our paper goes in a different direction. We are interested in obtaining the weak limit of the FP summation under a proper normalization. We will show that the resulting limit law is a new normal variance mixture involving the Mittag-Leffler distribution, which we call by Normal-Mittag-Leffler (in short NML) model. Moreover, we explore the statistical properties of this new law and show through an empirical illustration that it can be a good alternative for modeling financial data in comparison with existing models.

Other contributions of our paper are the discussion of two related problems: (i) as a result of the weak convergence of the FP sum, we show that the fractional Poisson distribution admits a mixed Poisson representation, which is a new finding; (ii) we show that the weak limit of a Conway-Maxwell-Poisson \citep{cm,smkbb} sum (under proper normalization) is normally distributed. In particular, this last point illustrates that not all extensions of the Poisson distribution considered in (\ref{random_sum}) results in a non-normal asymptotic distribution.

This paper is organized as follows. In Section \ref{sec:FPRS}, we establish the weak limit of a properly normalized fractional Poisson random sum and show that it is a mixture between the normal and the Mittag-Leffler distributions. Statistical properties of the NML law are explored. Section \ref{sec:estimation} is devoted to the parameter estimation of the NML model and asymptotic distribution of the estimators as well. Numerical experiments aiming at the investigation of the proposed estimation procedure under finite-samples are also presented and a real data application on the daily log-returns of Brazilian stock exchange \texttt{IBOVESPA} are provided in Section \ref{sec:sim_app}. Two related problems are addressed in Section \ref{sec:relatedproblems}. 

\section{Fractional Poisson random sum: Convergence and properties}\label{sec:FPRS}

In this section, we obtain the weak limit of a fractional Poisson (FP) random summation when properly normalized and study the properties of its limiting distribution. We begin by presenting the FP distribution/process and some of its basic properties.

\subsection{Fractional Poisson distribution}

The fractional Poisson process was introduced in \cite{repsai2000} as a non-markovian renewal process. More specifically, let $\{\Delta_{T_k}\}_{k\geq 1}$ be a sequence of $i.i.d.$ waiting times with cumulative distribution function
\begin{eqnarray}\label{cdf_type_I_ML}
P(\Delta_{T_k}\leq t)=1-\mathcal{E}_{\kappa}(-\nu t^{\kappa}),\quad t>0,
\end{eqnarray}
for $\nu>0$ and $0<\kappa\leq 1$, where $\mathcal{E}_{\kappa}(\cdot)$ is the Mittag-Leffler function defined by
\begin{equation}\label{ml_function}
\mathcal{E}_{\kappa}(z) = \sum\limits_{m = 0}^{\infty} \dfrac{z^m}{\Gamma(\kappa m + 1)},\quad z \in \mathbb{R},
\end{equation}
where $\Gamma(\cdot)$ is the gamma function. For more details on the Mittag-Leffler function, see \cite{gkmr}. A random variable with distribution function given by (\ref{cdf_type_I_ML}) is said to follow a type 1 Mittag-Leffler distribution; see \cite{hui2016}. For $\kappa=1$, the exponential distribution is obtained as a particular case. Write $T_n=\Delta_{T_1}+\Delta_{T_2}+\dots+\Delta_{T_n}$ for the time of the $n$-th jump and let $$N_{\nu,\kappa}(t)=\sup\{n\geq 0: T_n\leq t\}.$$ 

Then, $\{N_{\nu,\kappa}(t)\}_{t\geq 0}$ is a renewal process with type 1 Mittag-Leffler waiting times, which is called by fractional Poisson process, with parameters $\nu$ and $\kappa$. Let $P_{\nu,\kappa}(n,t)$ be the probability of $n$ arrivals up to time $t$. \cite{l} has shown that
$$P_{\nu,\kappa}(n,t) = \frac{(\nu t^{\kappa})^n}{n!}\sum\limits_{i = 0}^{\infty}\frac{(i + n)!}{i!}\frac{(-\nu t^{\kappa})^i}{\Gamma(\kappa(i + n) + 1)}, \; n=0,1,2\dots.$$

The mean and variance of $N_{\nu,\kappa}(t)$ are respectively given by
\begin{eqnarray}\label{moments}
E(N_{\nu,\kappa}(t)) = \frac{\nu t^{\kappa}}{\Gamma(\kappa + 1)}\quad\mbox{and}\quad
\mbox{Var}(N_{\nu,\kappa}(t)) = E(N_{\nu,\kappa}(t)) + E^2(N_{\nu,\kappa}(t))\left\{\frac{\kappa B(\kappa, 1/2)}{2^{2\kappa - 1}} - 1\right\},
\end{eqnarray}
where $B(a, b) = \dfrac{\Gamma(a)\Gamma(b)}{\Gamma(a+b)}$ is the beta function with $a > 0$ and $b > 0$. For more details on the FP process, we refer to \cite{l}.

We use $\mbox{FPP}(\nu, \kappa)$ to denote a fractional Poisson process with parameters $\nu$ and $\kappa$. Note that the parameter $\nu$ is also related to the asymptotic distribution of the inter-arrival times of the fractional Poisson process, which is a power-law with parameter $\nu$. 
The FPP has the Poisson process as a particular case when $\kappa=1$.

For fixed $t>0$, we have that the probability generating function of $N_{\nu,\kappa}(t)$ is given by
\begin{equation}\label{pgf_fp}
    G_{N_{\nu,\kappa}(t)}(s) = E(s^{N_{\nu,\kappa}(t)}) = \mathcal{E}_{\kappa}\big(\nu t^{\kappa}(s - 1)\big),\quad s\in\mathbb R.
\end{equation}

In what follows, we consider a fractional Poisson random variable $N\equiv N_{\nu,\kappa}(1)$ (FP process at time $t=1$) and denote $N\sim\mbox{FP}(\nu,\kappa)$. The first two cumulants and probability generating function of the FP distribution are obtained respectively from (\ref{moments}) and (\ref{pgf_fp}) with $t=1$.

\subsection{Weak limit and properties}

We now obtain the characteristic function of the limit distribution of a fractional Poisson random sum under a proper normalization. We show that the resulting limit distribution is a new type of normal variance mixture. In the sequence, a random variable $U$ with a type 2 Mittag-Leffler (ML) distribution depending on the parameter $\kappa\in(0,1)$, denoted by $U\sim ML(\kappa)$, has density function 
\begin{equation}\label{densidade mittagleffler}
f_{\kappa}(u) = \dfrac{1}{\pi\kappa}\sum\limits_{j = 1}^{\infty}\frac{(-1)^{j-1}}{j!}\sin(\pi\kappa j)\Gamma(\kappa j + 1)u^{j-1},\quad u>0,
\end{equation}
and moment generation function given by $E(\exp(sU))=\mathcal{E}_{\kappa}(s)$, for $s\in\mathbb R$. For more details on the type 2 Mittag-Leffler law, see \cite{hui2016}.  From now on, we refer to this model as simply Mittag-Leffler; not to be confused with the type 1 version. We have that $\lim_{\kappa\rightarrow 1^-}E(\exp(sU))=\mathcal{E}_{1}(s)=\exp(s)$, for $s\in\mathbb R$. Therefore, the (type 2) ML distribution converges in distribution to a degenerate random variable at 1 as $\kappa\rightarrow1^-$.

The notation $V_1\overset{d}{=}V_2$ means the random variables $V_1$ and $V_2$ follow the same distribution. 

\begin{proposition}\label{convergencia distribuicao padrao ML}
Let $N\sim\mbox{FP}(\nu, \kappa)$ and $\{W_n\}_{n\in\mathbb{N}}$ a sequence of $i.i.d.$ random variables independent of  $N$ with $E(W_1) = 0$ and $\mbox{Var}(W_1) = 1$. Then, \\

\noindent (i) it holds that
\begin{equation}\label{limit}\widetilde{S}_{\nu} \equiv 
\frac{1}{\sqrt{\nu}}\sum\limits_{i = 1}^{N} W_i \xrightarrow[\nu \rightarrow \infty]{d} Y,
\end{equation}
where $Y$ has characteristic function given by
\begin{eqnarray}\label{fch}
\varphi_Y(s) = \mathcal{E}_{\kappa}\left(-\frac{s^2}{2}\right), \quad s \in \mathbb{R}, \; \; \kappa \in (0, 1];
\end{eqnarray}

\noindent (ii) the limit random variable $Y$ in \eqref{limit} admits the stochastic representation
\begin{equation}\label{representacao funcao caracteristica}
    Y \overset{d}{=} \sqrt{U}Z,
\end{equation}
where $Z \sim N(0, 1)$ and $U \sim ML(\kappa)$ are independent.

\end{proposition}

\begin{proof}
(i) We will show the convergence of the characteristic functions and then the result will follow from the L\'evy's Continuity Theorem.

We first need to justify the given normalization of $S_\nu\equiv \sum\limits_{i = 1}^{N} W_i $. We have that $E(S_\nu)=E(N)E(W_1)=0$ and $\mbox{Var}(S_\nu)=E(\mbox{Var}(S_\nu|N))+\mbox{Var}(E(S_\nu|N))=E(\mbox{Var}(S_\nu|N))=E(N)=\dfrac{\nu}{\Gamma(\kappa+1)}$. Therefore, $\mbox{Var}(S_\nu)=\mathcal O(\nu^{1/2})$ and then $\widetilde{S}_{\nu}=\dfrac{S_\nu}{\sqrt{\nu}}$ will have a non-trivial limit in distribution. Using properties of conditional expectation, it holds that
\begin{eqnarray*}
\varphi_{\widetilde{S}_{\nu}}(s) &=& E\left\{E\left(\exp\left\{i\,s\,\nu^{-1/2}\sum\limits_{i = 1}^{N}W_i\right\} \Bigg| N\right)\right\} 
 = G_{N}\left(\varphi_{W_1}\left( \frac{s}{\sqrt{\nu}}\right)\right) 
 = \mathcal{E}_{\kappa}\left(\nu\left\{\varphi_{W_1}\left(\frac{s}{\sqrt{\nu}}\right) - 1\right\}\right),
\end{eqnarray*}
where $G_{N}(\cdot)$ is the probability generating function of $N$ given in (\ref{pgf_fp}) with $t=1$ and $\varphi_{W_1}(\cdot)$ is the characteristic function of $W_1$. 
Since $\mathcal{E}_{\kappa}(\cdot)$ is continuous, it follows that
\begin{equation}\label{lim_aux}
   \lim\limits_{\nu \to \infty} \varphi_{\widetilde{S}_{\nu}}(s) = \mathcal{E}_{\kappa}\left( \lim\limits_{\nu \to \infty}\nu\left\{\varphi_{W_1}\left(\frac{s}{\sqrt{\nu}}\right) - 1\right\}\right). 
\end{equation}

The limit above has an indeterminate form of the type ``$\infty\times 0$". We apply L'H\^opital's rule in (\ref{lim_aux}) to obtain that
\begin{eqnarray}\label{lim_aux2}
\lim\limits_{\nu \to \infty}\dfrac{\varphi_{W_1}\left(\frac{s}{\sqrt{\nu}}\right) - 1}{1/\nu}=\lim\limits_{\nu \to \infty}\dfrac{(-s/2)\nu^{-3/2}\varphi'_{W_1}\left(s/\sqrt{\nu}\right)}{-\nu^{-2}}=\lim\limits_{\nu \to \infty}\dfrac{(s/2)\varphi'_{W_1}\left(s/\sqrt{\nu}\right)}{\nu^{-1/2}},
\end{eqnarray}
where $\varphi'_{W_1}(x)=d\varphi_{W_1}(x)/dx$. Note that $\varphi'_{W_1}\left(0\right)=iE(W_1) = 0$ and, therefore, (\ref{lim_aux2}) has again the indeterminate form ``$0/ 0$".
A second application of L'H\^opital's rule gives us that 
\begin{eqnarray*}
\frac{s}{2}\lim\limits_{\nu \to \infty}\dfrac{\varphi'_{W_1}\left(s/\sqrt{\nu}\right)}{\nu^{-1/2}}&=&\frac{s}{2}\lim\limits_{\nu \to \infty}\dfrac{(-s/2)\nu^{-3/2}\varphi''_{W_1}\left(s/\sqrt{\nu}\right)}{-1/2\nu^{-3/2}}\\
&=&\frac{s^2}{2}\lim\limits_{\nu \to \infty}\varphi''_{W_1}\left(s/\sqrt{\nu}\right)\\
&=&\frac{s^2}{2}\varphi''_{W_1}\left(0\right) = \frac{s^2}{2}i^2E(W_1^2)=-\frac{s^2}{2},
\end{eqnarray*}
where $\varphi''_{W_1}(x)=d^2\varphi_{W_1}(x)/dx^2$. Hence,
\begin{equation*}
   \lim\limits_{\nu \to \infty} \varphi_{\widetilde{S}_{\nu}}(s) = \mathcal{E}_{\kappa}\left(- \frac{s^2}{2}\right), \; \forall s \in \mathbb{R}.  
\end{equation*}

\noindent (ii) In \cite{gkmr} (see  Section 3.7) it is shown that the Mittag-Leffler function with negative argument can be written as  
\begin{equation}\label{representacao M-L}
    \mathcal{E}_{\kappa}\left(-\frac{s^2}{2}\right) = \int\limits_{0}^{\infty}e^{-\frac{s^2}{2}u}f_{\kappa}(u)du,
\end{equation}
where $f_{\kappa}(\cdot)$ is the density function of a (type 2) Mittag-Leffler distribution given in (\ref{densidade mittagleffler}); see \cite{bm}. This indicates that the limiting distribution is a normal variance mixture involving a ML distribution. To prove this claim, let $U\sim\mbox{ML}(\kappa)$ independent of $Z\sim N(0,1)$. The characteristic function of $\sqrt{U}Z$ is
\begin{eqnarray}
\nonumber \varphi_{\sqrt{U}Z}(s) = E(e^{is\sqrt{U}Z}) &=& E\left(E\left(e^{is\sqrt{U}Z}|U\right)\right)=E\left(\varphi_Z(s\sqrt{U})\right)\\
\nonumber &=& E\left(e^{-\frac{s^2}{2}U}\right)=\int\limits_{0}^{\infty}e^{-\frac{s^2}{2}u}f_{\kappa}(u)du,\quad s\in\mathbb R,
\nonumber 
\end{eqnarray}
which coincides with the characteristic function of our limiting distribution in (\ref{representacao M-L}). Therefore, we have proven that $Y \overset{d}{=} \sqrt{U}Z$.
\end{proof}

\begin{definition}
A random variable $Y$ is said to follow a standard Normal-Mittag-Leffler distribution if satisfies the stochastic representation (\ref{representacao funcao caracteristica}). We denote by $Y\sim\mbox{NML}(\kappa)$.
\end{definition}

\begin{remark}
To the best of our knowledge, the NML distribution defined above as a normal variance mixture is a new law in the literature. Moreover, the stochastic representation in (\ref{representacao funcao caracteristica}) will be particularly useful in Section \ref{sec:simulation} when performing Monte Carlo simulation, where an NML random variable generator is required.
\end{remark}

\begin{remark}
From (\ref{fch}), it can be shown that our NML law weakly converges to the standard Laplace and normal distributions when $\kappa\rightarrow0^+$ and $\kappa\rightarrow1^-$, respectively. Therefore, our model makes a continuous bridge between these two well-known distributions. Similarly, we have that the FP sum contains the geometric and Poisson summations as $\kappa\rightarrow0^+$ and $\kappa\rightarrow1^-$, respectively.
\end{remark}

\begin{remark}
Our NML law is different from the Mittag-Leffler-Gaussian (MLG) distribution proposed by \cite{aa}, despite similar names. The MLG model was introduced by replacing the exponential function with the Mittag-Leffler function in the normal density function. In our case, the NML distribution naturally arises as a weak limit of a fractional Poisson random sum.
\end{remark}

\begin{remark} In \cite{hui2016}, the author observes that a random variable $U \sim ML(\kappa)$ can be represented by
\begin{eqnarray*} 
U \overset{d}{=} \dfrac{1}{Q^\kappa},
\end{eqnarray*}
where $Q$ is one-sided stable distributed with tail exponent $\kappa\in(0,1)$, with density function 
\begin{equation*}
f_Q(q)  = \dfrac{1}{\pi}\sum\limits_{j = 1}^{\infty}\frac{(-1)^{j-1}}{j!}\sin(\pi\kappa j)\Gamma(\kappa j + 1)q^{-(\kappa j+1)},\quad q>0.
\end{equation*}
Therefore, $Y$ can be represented as
\begin{eqnarray}\label{mixture NML}
Y \overset{d}{=} \dfrac{Z}{Q^{\kappa/2}},
\end{eqnarray}
where $Z \sim N(0, 1)$ and $Q$ are independent. The stochastic representation (\ref{mixture NML}) is an alternative to (\ref{representacao funcao caracteristica}).
\end{remark}

Let $Y\sim\mbox{NML}(\kappa)$. The moments of $Y$ are easily obtained through the characteristic function (\ref{fch}) and the sum representation of the Mittag-Leffler function in (\ref{ml_function}). Its moments of odd order are null, that is $E(Y^{2j+1})=0$ for all $j\geq 0$. The even moments are given by $E(Y^{2j}) = \dfrac{(2j)!}{2^j\Gamma(j\kappa + 1 )}$, for  $j \in \mathbb{N}$. In particular, the first four cumulants of the NML law are
\begin{equation*}
E(Y) = 0,\quad \mbox{Var}(Y) = \dfrac{1}{\Gamma(\kappa + 1)},\quad 
\gamma_1 = 0,\quad \mbox{and}\quad
\gamma_2 = \dfrac{6\Gamma^2(\kappa + 1)}{\Gamma(2\kappa + 1)} - 3,
\end{equation*}
where $\gamma_1$ and and $\gamma_2$ are the asymmetry coefficient and excess kurtosis, respectively.

The following limits indicate the asymptotic behavior of the variance and excess kurtosis of $Y$ as function of $\kappa$ when approaching the boundaries of the parameter space:
$$\left\{\begin{array}{rc}
\lim\limits_{\kappa \to 0} \mbox{Var}(Y) = \lim\limits_{\kappa \to 0}  \dfrac{1}{\Gamma(\kappa + 1)} = 1,\\ \noalign{\smallskip}
\lim\limits_{\kappa \to 1} \mbox{Var}(Y) = \lim\limits_{\kappa \to 1}  \dfrac{1}{\Gamma(\kappa + 1)} = 1,
\end{array}\right. \;\;\;\;\;\; \left\{\begin{array}{ll}
\lim\limits_{\kappa \to 0} \gamma_2 = \lim\limits_{\kappa \to 0}  \dfrac{6 \Gamma^2(\kappa + 1)}{\Gamma(2\kappa + 1)} - 3 = 3,\\ \noalign{\smallskip}
\lim\limits_{\kappa \to 1} \gamma_2 = \lim\limits_{\kappa \to 1}  \dfrac{6 \Gamma^2(\kappa + 1)}{\Gamma(2\kappa + 1)} - 3 = 0.
\end{array}\right.$$

Figure \ref{varecurtose} displays the graphics of the variance and ${\gamma}_2$ as function of $\kappa$. Note that, at the limit of the parametric space of $\kappa$, we have that the variance of the NML distribution is equal to one. For $\kappa>0$, the NML law has tails heavier than the normal distribution since $\gamma_2 > 0$. When $\kappa \to 1$, the excess kurtosis of the Normal Mittag-Leffler distribution coincides with that of the normal law, as expected. The global maximum point of $\Gamma(\kappa + 1)^{-1}$, for $0 < \kappa \leq 1$, is approximately $\kappa = 0.4616$. Therefore, the variance starts at $1$ in the limit when $\kappa \to 0$, increases reaching its maximum value of approximately $1.1292$ and then decreases until it reaches $1$ again when $\kappa \to 1$. The excess kurtosis takes on a maximum value of $3$ when $\kappa \to 0$. This last fact was expected since our model converges to the standard Laplace distribution when $\kappa\rightarrow 0$.

\begin{figure}
	\includegraphics[width=0.49\linewidth]{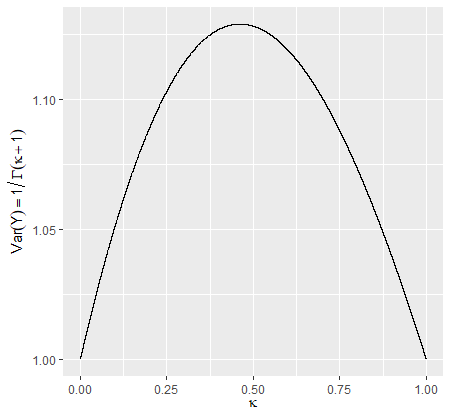} \includegraphics[width=0.49\linewidth]{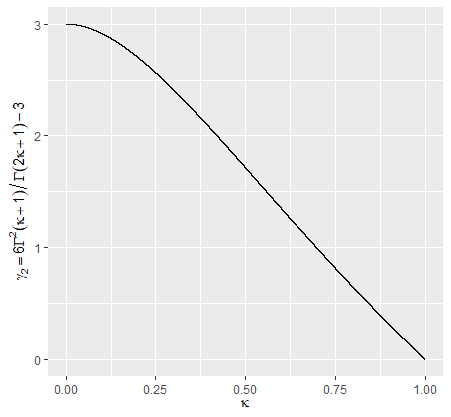}
\caption{Graphics of $\mbox{Var}(Y)$ and $\gamma_2$ as function of $\kappa$.} 
\label{varecurtose}
\end{figure}

From the normal variance mixture representation in (\ref{representacao funcao caracteristica}) and Expression (\ref{densidade mittagleffler}), we obtain that the density function of a NML random variable $Y$ can be expressed by
\begin{eqnarray}\label{densidade NML}
\nonumber f_Y(y) &=& \int\limits_{0}^{\infty} f(y|u)f_{\kappa}(u)du\\
     &=& \int\limits_{0}^{\infty} \frac{1}{\sqrt{2\pi u}}e^{-\frac{y^2}{2u}}\frac{1}{\pi\kappa}\sum\limits_{j = 1}^{\infty}\frac{(-1)^{j-1}}{j!}\sin(\pi\kappa j)\Gamma(\kappa j + 1)u^{j-1} du, \; y \in \mathbb{R}.
\end{eqnarray}

The implementation of the density function above can suffer from numerical instability due to the ML density. To overcome this issue, we find another expression for the NML law based on the Inversion Formula Theorem, which is given in the next proposition.

\begin{proposition}\label{densidade pela formula inversao}
The probability density function of a random variable $Y \sim\mbox{NML}(\kappa)$, for $0 < \kappa \leq 1$, can be expressed by
\begin{eqnarray}\label{densityNML}
f_Y(y) = \frac{1}{\pi}\int\limits_{0}^{\infty}\cos(ty)\mathcal{E}_{\kappa}\left(-\frac{t^2}{2}\right)dt,\;\; y \in \mathbb{R}.
\end{eqnarray}
\end{proposition}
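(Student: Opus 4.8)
The plan is to invoke the Fourier inversion theorem applied to the characteristic function of $Y$ already obtained in Proposition \ref{convergencia distribuicao padrao ML}, namely $\varphi_Y(t) = \mathcal{E}_{\kappa}(-t^2/2)$. Recall that the inversion theorem guarantees that a random variable with an absolutely integrable characteristic function admits a bounded continuous density given by
$$
f_Y(y) = \frac{1}{2\pi}\int\limits_{-\infty}^{\infty} e^{-ity}\varphi_Y(t)\,dt.
$$
Thus the first step is to verify the integrability condition $\int_{-\infty}^{\infty}|\varphi_Y(t)|\,dt < \infty$, after which the stated formula should follow from a symmetry reduction.

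The main obstacle is precisely this integrability check, which hinges on the decay of the Mittag-Leffler function along the negative real axis. For $\kappa = 1$ it is immediate, since $\varphi_Y(t) = e^{-t^2/2}$. For $0 < \kappa < 1$, I would appeal to the classical asymptotic expansion $\mathcal{E}_{\kappa}(-x) \sim \left(\Gamma(1-\kappa)\,x\right)^{-1}$ as $x \to \infty$ (see \cite{gkmr}). Substituting $x = t^2/2$ gives $\mathcal{E}_{\kappa}(-t^2/2) = O(t^{-2})$ as $|t| \to \infty$, which is integrable at infinity, while near the origin $\varphi_Y$ is continuous with $\varphi_Y(0) = 1$. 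Hence $\varphi_Y$ is absolutely integrable on $\mathbb{R}$ for every $\kappa \in (0,1]$, and the inversion theorem applies.

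Once integrability is secured, the remainder is a short symmetry argument. Since $\mathcal{E}_{\kappa}(-t^2/2)$ depends on $t$ only through $t^2$, the characteristic function $\varphi_Y$ is real-valued and even. Writing $e^{-ity} = \cos(ty) - i\sin(ty)$, the odd imaginary contribution $\sin(ty)$ integrates to zero against the even function $\varphi_Y$, while the even real part yields
$$
f_Y(y) = \frac{1}{2\pi}\int\limits_{-\infty}^{\infty}\cos(ty)\,\mathcal{E}_{\kappa}\!\left(-\frac{t^2}{2}\right)dt = \frac{1}{\pi}\int\limits_{0}^{\infty}\cos(ty)\,\mathcal{E}_{\kappa}\!\left(-\frac{t^2}{2}\right)dt,
$$
where the last equality folds the integral over $(-\infty,0)$ onto $(0,\infty)$ using that the integrand is even in $t$. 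This is exactly the claimed expression \eqref{densityNML}, completing the argument.
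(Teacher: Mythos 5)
Your proof is correct and follows the same skeleton as the paper's: verify that the characteristic function $\varphi_Y(t)=\mathcal{E}_{\kappa}\left(-\frac{t^2}{2}\right)$ is absolutely integrable, apply the inversion theorem, and reduce to a cosine integral over $(0,\infty)$ by parity. The only genuine difference is how the integrability is certified, and this is the one nontrivial step. The paper quotes from \cite{aa} the exact value $\int_{-\infty}^{\infty}\mathcal{E}_{\kappa}\left(-\frac{t^2}{2}\right)dt=\pi\sqrt{2}/\Gamma\left(1-\frac{\kappa}{2}\right)$ and invokes \cite{pol1948} for complete monotonicity (hence positivity) of $\mathcal{E}_{\kappa}(-x)$, so that $|\varphi_Y|=\varphi_Y$ and the known integral value settles finiteness. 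You instead use the classical tail expansion $\mathcal{E}_{\kappa}(-x)\sim\left(\Gamma(1-\kappa)\,x\right)^{-1}$ as $x\to\infty$ from \cite{gkmr}, giving $\varphi_Y(t)=\mathcal{O}(t^{-2})$ and hence integrability, with the Gaussian case $\kappa=1$ handled separately (necessary, since $\Gamma(1-\kappa)$ blows up there). Your route is slightly more self-contained: it needs only the standard Mittag-Leffler asymptotics, requires no positivity argument (the $\mathcal{O}$-bound controls the absolute value directly), and would generalize to any characteristic function with polynomial decay of order greater than one. The paper's route buys more than integrability: the exact constant (which, for instance, immediately gives $f_Y(0)=\frac{1}{\sqrt{2}\,\Gamma\left(1-\frac{\kappa}{2}\right)}$), and the positivity/complete monotonicity of $\varphi_Y$, which is of independent interest given the normal variance mixture representation. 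Either argument is fully rigorous.
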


\begin{proof}
To employ the Inversion Formula Theorem (for instance, see Chapter 4 of \cite{g}), we need to show that the characteristic function is integrable, that is $\displaystyle\int\limits_{-\infty}^{\infty}|\varphi_Y(t)|dt < \infty$.

From \cite{aa}, we have that
$\displaystyle\int_{-\infty}^{\infty}\mathcal{E}_{\kappa}\left(-\frac{t^2}{2}\right)dt =  \dfrac{\pi\sqrt{2}}{\Gamma\left(1 - \frac{\kappa}{2}\right)}, \;\; 0 < \kappa \leq 1$. Further, from \cite{pol1948}, we obtain that $\mathcal{E}_{\kappa}\left(-\frac{t^2}{2}\right)$ is completely monotonic and strictly positive for all $t\in\mathbb R$. Using these results, it follows that
$$\int\limits_{-\infty}^{\infty}|\varphi_Y(t)|dt = \int_{-\infty}^{\infty}\left|\mathcal{E}_{\kappa}\left(-\frac{t^2}{2}\right)\right|dt = \int_{-\infty}^{\infty}\mathcal{E}_{\kappa}\left(-\frac{t^2}{2}\right)dt= \frac{\pi\sqrt{2}}{\Gamma\left(1 - \dfrac{\kappa}{2}\right)} < \infty.$$

Finally, apply the Inversion Formula Theorem to obtain that the probability density function of $Y$ can be expressed by
\begin{eqnarray*}
f_Y(y) &=& \frac{1}{2\pi}\int\limits_{-\infty}^{\infty}e^{-ity}\varphi_Y(t)dt
 = \frac{1}{2\pi}\int\limits_{-\infty}^{\infty}e^{-ity}\varphi_Y(t)dt = \frac{1}{2\pi}\int\limits_{-\infty}^{\infty}[\cos(ty) - i\sin(ty)]\varphi_Y(t)dt\\
&=& \frac{1}{2\pi}\int\limits_{-\infty}^{\infty}\underbrace{\cos(ty)\mathcal{E}_{\kappa}\left(-\frac{t^2}{2}\right)}_{\textrm{even function}}dt - \frac{i}{2\pi}\int\limits_{-\infty}^{\infty}\underbrace{\sin(ty)\mathcal{E}_{\kappa}\left(-\frac{t^2}{2}\right)}_{\textrm{odd function}}dt\\
&=& \frac{1}{\pi}\int\limits_{0}^{\infty}\cos(ty)\mathcal{E}_{\kappa}\left(-\frac{t^2}{2}\right)dt,\;\; y \in \mathbb{R}.
\end{eqnarray*}
\end{proof}

Figure \ref{densidadeNML} displays the graph of the probability density function  given in (\ref{densityNML}) for some values of $\kappa$, including the limiting cases Laplace ($\kappa\rightarrow0^+$) and normal ($\kappa\rightarrow1^-$) densities.

\begin{figure}
    \centering
     \includegraphics[width=\linewidth]{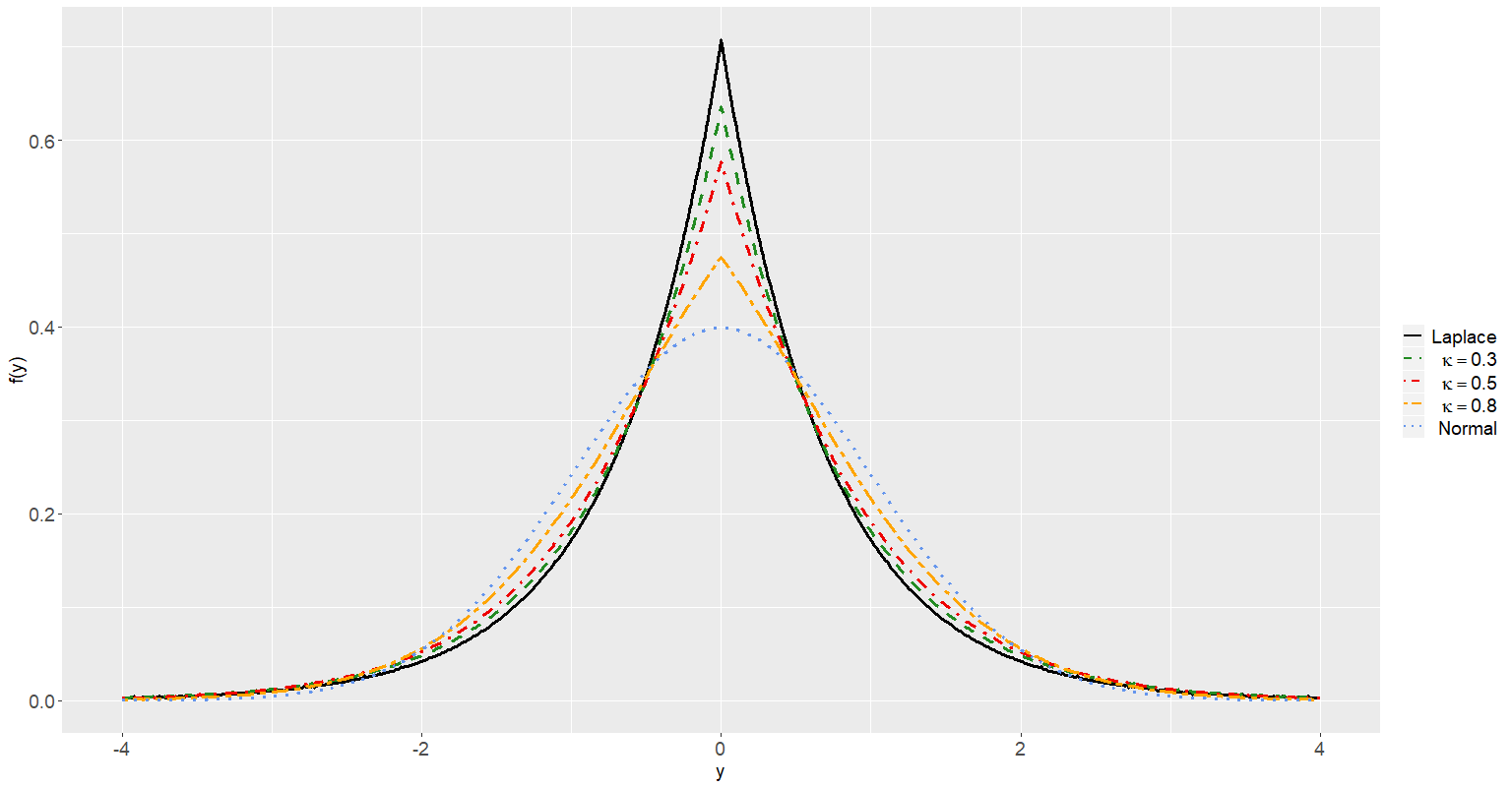}    
    \caption{Graph of the probability density function for $Y \sim\mbox{NML}(\kappa)$ for some values of $\kappa$.}
    \label{densidadeNML}
\end{figure}

We now finish this section by proposing a location-scale extension of our standard NML distribution through a simple linear transformation, which is of practical interest. 

\begin{definition}
Let $Y\sim\mbox{NML}(\kappa)$, $\mu\in\mathbb{R}$ and $\sigma^2>0$. If $X=\sigma Y+\mu$, then we say that $X$ follows a non-standard NML law and denote $X\sim\mbox{NML}(\mu,\sigma^2,\kappa)$.
\end{definition}

From the moments obtained for the standard NML model, we directly obtain the moments of $X\sim\mbox{NML}(\mu,\sigma^2,\kappa)$ as follows:
\begin{eqnarray}\label{momentos nml geral expressao}
E(X^n) = \left\{\begin{array}{lr}
\sum\limits_{j = 0}^{(n-1)/2}\dfrac{n!}{(2j + 1)!}\dfrac{2^{(2j+1-n)/2}}{\Gamma\left(\left(\frac{n-2j-1}{2}\right)\kappa + 1\right)}\sigma^{n-2j-1}\mu^{2j+1},&\mbox{if $n$ is odd},\\[0.7cm]
\sum\limits_{j = 0}^{n/2}\dfrac{n!}{(2j)!}\dfrac{2^{j - n/2}}{\Gamma\left(\left(\frac{n}{2} - j\right)\kappa + 1\right)}\sigma^{n-2j}\mu^{2j}, &\mbox{if $n$ is even}.
\end{array}\right. 
\end{eqnarray} 

In particular, we have that $E(X)=\mu$ and $\mbox{Var}(X)=\dfrac{\sigma^2}{\Gamma(\kappa + 1)}$.
Moreover, from Proposition  \ref{densidade pela formula inversao}, we obtain that the density function of 
$X \sim\mbox{NML}(\mu, \sigma^2, \kappa)$ can be written as
\begin{equation*}\label{densidade NML inversao geral}
    f_X(x) = \dfrac{1}{\sigma\pi}\int\limits_{0}^{\infty}\cos\left(t\left(\dfrac{x-\mu}{\sigma}\right)\right)\mathcal{E}_{\kappa}\left(-\dfrac{t^2}{2}\right)dt, \; \;x \in \mathbb{R}.
\end{equation*}

In the next section, we develop a parameter estimation procedure for the non-standard NML distribution and obtain the asymptotic distribution of the proposed estimators.

\section{Parameter estimation and asymptotic distribution}\label{sec:estimation}

In this section we approach the inferential aspects of the NML distribution. Due to the complicated form of the density function (\ref{densidade NML}) (or (\ref{densityNML})), the maximum likelihood method is cumbersome. A simple widely employed strategy in these cases is to consider a type-method of moments estimation procedure; for instance, see \cite{koz}, \cite{cahetal2010}, \cite{wanetal2014}, and \cite{cahpol2014}. Let $Y_1,\ldots,Y_n$ be an $i.i.d.$ sample from the $\mbox{NML}(\mu, \sigma^2, \kappa)$ law. 
We propose a method of moments (MM) estimation based on the first, second and forth moments of the NML distribution; note that the skewness (related to the third moment) equals 0 in our case. Let $\mu_k = E(Y_1^k)$ and $M_k = \dfrac{1}{n}\sum\limits_{i = 1}^nY_i^k$ denote the $k$-th population moment and $k$-th sampling moment, respectively, $k\in\mathbb N$. The MM estimators, say $\widehat\mu$, $\widehat\sigma^2$, and $\widehat\kappa$, are obtained as the solution of the following system of equations:
\begin{empheq}
[left=\empheqlbrace]{align}\begin{array}{rc}\label{sistema}
\widehat\mu_1 =M_1\\ \noalign{\smallskip}
\widehat\mu_2 =M_2\\ \noalign{\smallskip}
\widehat\mu_4 =M_4
\end{array} \Longrightarrow \empheqlbrace\begin{array}{ll}
\widehat\mu   = M_1\\ \noalign{\smallskip}
\dfrac{\widehat\sigma^2}{\Gamma(\kappa + 1)} + \widehat\mu^2 = M_2\\ \noalign{\smallskip}
\dfrac{6(\widehat\sigma^2)^2}{\Gamma(2\widehat\kappa + 1)} + \dfrac{6\widehat\mu^2\widehat\sigma^2}{\Gamma(\widehat\kappa + 1)} + \widehat\mu^4= M_4.
\end{array}
\end{empheq}

To find the solution of \eqref{sistema}, we study the behavior of the function $h: (0, 1] \to \mathbb{R}$ defined by 
\begin{eqnarray*}\label{h til}
h(\kappa) \equiv \frac{\Gamma^2(\kappa + 1)}{\Gamma(2\kappa + 1)}.
\end{eqnarray*}

Applying the logarithm and calculating the first derivative, we obtain that
\begin{equation}\label{derivada log h}
    \frac{d}{d \kappa} \log{h(\kappa)} = \frac{d}{d \kappa} \log\left\{\frac{\Gamma^2(\kappa + 1)}{\Gamma(2\kappa + 1)} \right\} = 2[\Psi(\kappa + 1) - \Psi(2\kappa + 1)],
\end{equation}
where $\Psi(z) = \dfrac{\Gamma'(z)}{\Gamma(z)}$ is the digamma function. On the other hand, $\Psi(z)$ can be written as (see \cite{as})
$$\Psi(z) = \int\limits_{0}^{1}\dfrac{1 - t^{z - 1}}{1 - t}dt - \gamma,\,\,\,\,\,z>0,$$
where $\gamma = \int\limits_{0}^{\infty}\left(\dfrac{1}{e^t - 1} - \dfrac{1}{te^t}\right)dt$. Hence, we can write (\ref{derivada log h}) as
$$\frac{d}{d \kappa} \log{h(\kappa)} = 2\int\limits_{0}^{1}\dfrac{-t^{\kappa}(1 - t^{\kappa})}{1 - t}dt.$$

Since $\int\limits_{0}^{1}\dfrac{-t^{\kappa}(1 - t^{\kappa})}{1 - t}dt < 0$ for all $\kappa \in (0, 1]$, it follows that $$\frac{d}{d \kappa} \log{h(\kappa)} < 0, \,\,\forall \; \; \kappa \in (0, 1].$$

This shows that $h(\cdot)$ is monotone function. Also, the continuity of $h(\cdot)$ follows from the continuity of the gamma function. Therefore, $h(\cdot)$ has an inverse function which is denoted by $h^{-1}(\cdot)$. Figure \ref{sinalh} presents the graphic of  the function $\dfrac{d}{d \kappa} \log{h(\kappa)}$ versus $\kappa$, for $\kappa \in (0, 1]$.

\begin{figure}
    \centering
    \includegraphics[width=0.7\linewidth]{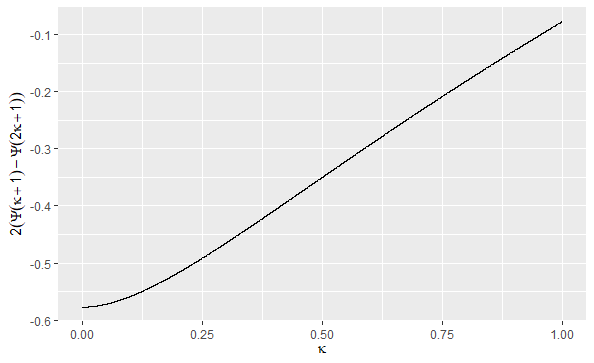}    
    \caption{Graph of $\dfrac{d}{d \kappa} \log{h(\kappa)}$ versus $\kappa$, for $\kappa \in (0, 1]$.}
    \label{sinalh}
\end{figure}

Solving the system of nonlinear equations in (\ref{sistema}), we obtain explicit expressions for the MM estimators as follows:
\begin{eqnarray}\label{estimacao metodo momentos}
\left\{\begin{array}{lll} \widehat{\mu} &=& M_1,\\
{ \widehat{\sigma}}^2 &=& (M_2 - M_1^2)\Gamma\left(h^{-1}\left(\dfrac{M_4 - 6M_1^2M_2 + 5M_1^4}{6(M_2 - M_1^2)^2}\right) + 1\right),\\
 \widehat{\kappa} &=& h^{-1}\left(\dfrac{M_4 - 6M_1^2M_2 + 5M_1^4)}{6(M_2 - M_1^2)^2}\right).
\end{array}\right.
\end{eqnarray}

In the next proposition, we establish the asymptotic properties of the MM estimators given in (\ref{estimacao metodo momentos}).

\begin{proposition}\label{MM_AN}
The MM estimators $\widehat\mu$, $\widehat\sigma^2$, and $\widehat\kappa$ are strongly consistent for $\mu$, $\sigma^2$, and $\kappa$, respectively, and satisfy the asymptotic normality $$\sqrt{n}\left\{( \widehat{\mu}, { \widehat{\sigma}}^2,  \widehat{\kappa}) - (\mu, \sigma^2, \kappa)\right\} \overset{d}{\longrightarrow} N_3(0, \nabla g\Sigma \nabla g^\top),$$
as $n\rightarrow\infty$, where the elements of the matrix $\Sigma$ are
\begin{eqnarray*}
&&\Sigma_{11} = \dfrac{\sigma^2}{\Gamma(\kappa + 1)}, \quad \Sigma_{12} = \Sigma_{21} =  \dfrac{2\mu\sigma^2}{\Gamma(\kappa + 1)},\quad
\Sigma_{13} = \Sigma_{31} = \dfrac{24\mu\sigma^4}{\Gamma(2\kappa + 1)} + \dfrac{4\mu^3\sigma^2}{\Gamma(\kappa + 1)},\\
&&\Sigma_{22} = \dfrac{6\sigma^4}{\Gamma(2\kappa + 1)} + \dfrac{4\mu^2\sigma^2}{\Gamma(\kappa + 1)} - \dfrac{\sigma^4}{[\Gamma(\kappa + 1)]^2},\\
&&\Sigma_{23} = \Sigma_{32} =  \dfrac{90\sigma^6}{\Gamma(3\kappa + 1)} - \dfrac{6\sigma^6}{\Gamma(2\kappa + 1)\Gamma(\kappa + 1)} + \dfrac{84\mu^2\sigma^4}{\Gamma(2\kappa + 1)} - \dfrac{6\mu^2\sigma^4}{[\Gamma(\kappa + 1)]^2} + \dfrac{8\mu^4\sigma^2}{\Gamma(\kappa + 1)},\\
&&\Sigma_{33} =
     \dfrac{16\mu^6\sigma^2}{\Gamma(\kappa + 1)} +  \dfrac{408\mu^4\sigma^4}{\Gamma(2\kappa + 1)} -  \dfrac{36\mu^4\sigma^4}{[\Gamma(\kappa + 1)]^2} -
      \dfrac{72\mu^2\sigma^6}{\Gamma(\kappa + 1)\Gamma(2\kappa + 1)} +
       \dfrac{2520\mu^2\sigma^6}{\Gamma(3\kappa + 1)}\\ &&\hspace{6cm}+ \dfrac{2520\sigma^8}{\Gamma(4\kappa + 1)} - 
         \dfrac{36\sigma^8}{[\Gamma(2\kappa + 1)]^2},
\end{eqnarray*} and
$g: \mathbb{R}^3 \to \mathbb{R}^3$ is defined by
\begin{flalign}\label{funcao g}
\nonumber g(x, y, z) &\equiv (g_1(x, y, z), g_2(x, y, z), g_3(x, y, z))\\ 
&\equiv \left(x, (y - x^2)\Gamma\left(h^{-1}\left(\dfrac{z - 6x^2y + 5x^4}{6(y - x^2)^2}\right) + 1\right), h^{-1}\left(\dfrac{z - 6x^2y + 5x^4}{6(y - x^2)^2}\right)\right),
\end{flalign}
with gradient $\nabla g$ given explicitly in the Appendix.
\end{proposition}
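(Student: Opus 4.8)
The plan is to recognize this as a textbook application of the multivariate delta method to the vector of empirical moments. Write $\mathbf{M}_n \equiv (M_1, M_2, M_4)^\top$ for the sample moment vector and $\mathbf{m} \equiv (\mu_1, \mu_2, \mu_4)^\top$ for the corresponding population moments, where $\mu_k = E(Y_1^k)$. By the explicit moment expressions in (\ref{momentos nml geral expressao}) every $\mu_k$ is finite, and by construction of the estimating equations (\ref{sistema}) the map $g$ of (\ref{funcao g}) satisfies $g(\mathbf{m}) = (\mu, \sigma^2, \kappa)$ while $g(\mathbf{M}_n) = (\widehat\mu, \widehat\sigma^2, \widehat\kappa)$; uniqueness of this solution is guaranteed because $h$ is strictly monotone, so $h^{-1}$ is single-valued. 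Both claims therefore reduce to transporting limit theorems for $\mathbf{M}_n$ through $g$.

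For strong consistency, first I would invoke the strong law of large numbers coordinatewise to obtain $\mathbf{M}_n \to \mathbf{m}$ almost surely. The function $g$ is continuous on a neighborhood of $\mathbf{m}$: the first coordinate is trivially continuous, $h^{-1}$ is continuous because $h$ was shown earlier to be continuous and strictly monotone on $(0,1]$, and $\Gamma(\,\cdot + 1)$ is continuous. The continuous mapping theorem then gives $g(\mathbf{M}_n) \to g(\mathbf{m})$ almost surely, which is exactly the strong consistency of $(\widehat\mu, \widehat\sigma^2, \widehat\kappa)$ for $(\mu, \sigma^2, \kappa)$.

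For asymptotic normality, I would apply the multivariate central limit theorem to the i.i.d.\ vectors $(Y_i, Y_i^2, Y_i^4)^\top$. This requires finiteness of the second moment of each coordinate, i.e.\ $E(Y_1^8) < \infty$, which again follows from (\ref{momentos nml geral expressao}); the limiting covariance matrix is $\Sigma$ with entries $\Sigma_{jk} = \mathrm{Cov}(Y_1^{a_j}, Y_1^{a_k})$ for $(a_1,a_2,a_3) = (1,2,4)$, giving $\sqrt{n}(\mathbf{M}_n - \mathbf{m}) \overset{d}{\to} N_3(0, \Sigma)$. I would then invoke the delta method: $g$ is differentiable at $\mathbf{m}$, the only nontrivial point being differentiability of $h^{-1}$, which holds by the inverse function theorem since $h$ is continuously differentiable with $h'(\kappa) < 0$ throughout $(0,1]$, as established through (\ref{derivada log h}). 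Hence $\sqrt{n}(g(\mathbf{M}_n) - g(\mathbf{m})) \overset{d}{\to} N_3(0, \nabla g\,\Sigma\,\nabla g^\top)$, which is the assertion.

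The routine but laborious part, and the main obstacle, is producing the explicit entries of $\Sigma$ and $\nabla g$. Computing $\Sigma$ demands the moments of $Y_1$ up to order eight, together with the cross terms in $\mathrm{Cov}(Y_1^{a_j}, Y_1^{a_k})$, all extracted from (\ref{momentos nml geral expressao}) and simplified via gamma-function identities. Computing $\nabla g$ requires differentiating the composite expression in (\ref{funcao g}); the chain rule produces terms involving $(h^{-1})'$, which by the inverse function theorem equal $1/h'(h^{-1}(\cdot))$ with $h'$ read off from (\ref{derivada log h}). These closed-form derivative computations I would relegate to the Appendix, as the paper does.
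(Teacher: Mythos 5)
Your proposal is correct and follows essentially the same route as the paper's proof: the strong law of large numbers for consistency, the multivariate central limit theorem applied to the vectors $(Y_i, Y_i^2, Y_i^4)^\top$ with covariance matrix $\Sigma$ read off from the moments in (\ref{momentos nml geral expressao}), and the delta method through the map $g$ of (\ref{funcao g}). If anything, you supply details the paper leaves implicit, namely the continuous mapping step in the consistency argument, the moment condition $E(Y_1^8)<\infty$ needed for the CLT, and the differentiability of $h^{-1}$ via the inverse function theorem.
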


\begin{proof}
The strongly consistency of the MM estimators follows from the Strong Law of Large Numbers. To establish the asymptotic normality, we apply the multivariate Central Limit Theorem to obtain that
\begin{eqnarray}\label{MCLT_app}
\sqrt{n}\left\{\left(\frac{\sum_{i = 1}^{n} Y_i}{n}, \frac{\sum_{i = 1}^{n} Y_i^2}{n}, \frac{\sum_{i = 1}^{n} Y_i^4}{n}\right) - \left(\mu_1, \mu_2, \mu_4\right)\right\} \overset{d}{\longrightarrow} N_3(0, \Sigma),\quad \mbox{as}\quad n\rightarrow\infty,
\end{eqnarray}
where $\mu_1 = E(Y) = \mu$, $\mu_2 = E(Y^2) = \dfrac{\sigma^2}{\Gamma(\kappa + 1)} + \mu^2$, $\mu_4 = E(Y^4) = \dfrac{6\mu^2\sigma^2}{\Gamma(\kappa + 1)} + \dfrac{6\sigma^4}{\Gamma(2\kappa + 1)} + \mu^4$, and $\Sigma$ is the asymptotic covariance matrix given by
\begin{eqnarray*}
\Sigma = 
\begin{pmatrix}
\mbox{Var}(Y)&\mbox{cov}(Y, Y^2)&\mbox{cov}(Y, Y^4)\\
\mbox{cov}(Y, Y^2)&\mbox{Var}(Y_1^2)&\mbox{cov}(Y^2, Y^4)\\
\mbox{cov}(Y, Y^4)&\mbox{cov}(Y^2, Y^4)&\mbox{Var}(Y^4)
\end{pmatrix},
\end{eqnarray*}
with explicit elements obtained from the moments of the random variable $Y\sim\mbox{NML}(\mu,\sigma^2,\kappa)$ given in (\ref{momentos nml geral expressao}). The desired result is now obtained by applying the Delta Method in (\ref{MCLT_app}) with $g(\cdot,\cdot,\cdot)$ assuming the form (\ref{funcao g}).
\end{proof}

In the next section, we provide numerical experiments involving artificial and real data sets to illustrate the finite-sample performance of the proposed estimators and the usefulness of the NML law in practice. 

\section{Numerical experiments}\label{sec:sim_app}

\subsection{Monte Carlo simulation}\label{sec:simulation}

We present a simulation study to verify the behavior of the proposed estimators for the NML parameters. All the numerical experiments provided in this paper were implemented using the software \texttt{R} \citep{R}. To generate random samples from the NML distribution, we use its normal variance mixture representation given in (\ref{representacao funcao caracteristica}), that is, $Y = \mu + \sigma\sqrt{U}Z$, with $Z \sim N(0, 1)$ independent of $U \sim ML(\kappa)$. To generate from the ML distribution, we consider the algorithm proposed by \cite{ri}, which is based on the Laplace transform.

In these simulations, we set $\mu = 0.5$, $\sigma^2 = 1$, $\kappa = 0.2,\; 0.3,\; 0.5,\; 0.6,\;0.8$, and sample sizes $n = 200, 500, 1000, 2000$. We also set 5000 Monte Carlo replications and in each step we generate a random sample from the NML distribution and obtain the MM estimates. The empirical means of the parameter estimates and their root mean square error (RMSE) are reported in Table \ref{resultados simulacao NML todos cenarios}.

\begin{table}
\scriptsize
\centering
\caption{Empirical mean and root mean square error (in parentheses) of the
parameter estimates under the NML model with $\mu=0.5$, $\sigma^2=1$, $\kappa = 0.2,\; 0.3,\; 0.5,\; 0.6,\;0.8$, and sample sizes $n = 200, 500, 1000, 2000$.}
\label{resultados simulacao NML todos cenarios}
\begin{tabular}{ccccc}
\toprule
 & \multicolumn{4}{c}{Estimates (RMSE)} \\[2pt]
 &\multicolumn{1}{c}{$n = 200$} & \multicolumn{1}{c}{$n = 500$} & \multicolumn{1}{c}{$n = 1000$} & \multicolumn{1}{c}{$n = 2000$} \\[2pt]\midrule
  
$\mu = 0.5$      &0.4920 (0.0729) &0.4958 (0.0460) &0.4972 (0.0330) &0.4983 (0.0235)\\[2pt]
$\sigma^2 = 1.0$ &0.9615 (0.1469) &0.9657 (0.0999) &0.9717 (0.0781) &0.9790 (0.0596)\\[2pt]
$\kappa = 0.2$   &0.5138 (0.3809) &0.4261 (0.2843) &0.3557 (0.2135) &0.3056 (0.1631)\\[12pt] 

$\mu = 0.5$      &0.4952 (0.0732) &0.4958 (0.0472) &0.4980 (0.0330) &0.4992 (0.0237)\\[2pt]
$\sigma^2 = 1.0$ &0.9956 (0.1451) &0.9899 (0.0963) &0.9929 (0.0698) &0.9974 (0.0532)\\[2pt]
$\kappa = 0.3$   &0.5390 (0.3205) &0.4454 (0.2295) &0.3952 (0.1781) &0.3517 (0.1368)\\[12pt] 

$\mu = 0.5$      &0.4968 (0.0753) &0.4991 (0.0471) &0.4989 (0.0332) &0.4999 (0.0239)\\[2pt]
$\sigma^2 = 1.0$ &1.0188 (0.1371) &1.0145 (0.0880) &1.0084 (0.0620) &1.0069 (0.0442)\\[2pt]
$\kappa = 0.5$   &0.5998 (0.2305) &0.5471 (0.1846) &0.5201 (0.1479) &0.5102 (0.1148)\\[12pt] 

$\mu = 0.5$      &0.4997 (0.0732) &0.4994 (0.0477) &0.5006 (0.0336) &0.5003 (0.0231)\\[2pt]
$\sigma^2 = 1.0$ &1.0189 (0.1331) &1.0139 (0.0851) &1.0075 (0.0591) &1.0043 (0.0407)\\[2pt]
$\kappa = 0.6$   &0.6559 (0.2101) &0.6281 (0.1725) &0.6088 (0.1364) &0.6030 (0.0986)\\[12pt] 

$\mu = 0.5$      &0.5006 (0.0733) &0.5003 (0.0462) &0.5011 (0.0325) &0.4997 (0.0233)\\[2pt]
$\sigma^2 = 1.0$ &1.0034 (0.1199) &1.0029 (0.0787) &1.0021 (0.0565) &1.0017 (0.0402)\\[2pt]
$\kappa = 0.8$   &0.7690 (0.1692) &0.7955 (0.1258) &0.8010 (0.0943) &0.8032 (0.0695)\\\bottomrule    
\end{tabular}
\end{table}

Looking at the results from Table \ref{resultados simulacao NML todos cenarios}, we observe that the bias and RMSE go to 0 as the sample size increases. This was expected since the estimators are consistent. In particular, the MM estimators of $\mu$ and $\sigma^2$ work very well in all scenarios considered. Regarding the estimation of $\kappa$, we see that $\widehat\kappa$ yields satisfactory estimates for $\kappa \geq 0.5$, but not for $\kappa=0.2,0.3$. Note that there is a considerable bias when  $\kappa<0.5$, even for large samples. The estimation under this setting needs further investigation. It is worth anticipating that this type of problem is not experienced in our application since $\widehat\kappa\approx0.5$ there.

A second simulation study enables us to evaluate the standard errors obtained through the asymptotic covariance matrix given in Proposition \ref{MM_AN}, which we call here the theoretical standard error. We compare them to the empirical standard errors obtained from the MM estimates in the same settings as before. The average theoretical and empirical standard errors are presented in Table \ref{resultados erro padrao NML todos cenarios}. We observe a good agreement between the theoretical and empirical standard errors for the cases where $\kappa\geq0.5$, especially when the sample size increases. On the other hand, we notice a considerable difference for the settings $\kappa=0.2$ and $\kappa=0.3$, except for the standard errors related to $\widehat\mu$. Again, this gives evidence that a special study is required to make an inference on the NML law when $\kappa<0.5$.

\begin{table}
\scriptsize
\centering
\caption{Average theoretical and empirical standard errors of the parameter estimates under the NML model with $\mu=0.5$, $\sigma^2=1$, $\kappa = 0.2,\; 0.3,\; 0.5,\; 0.6,\;0.8$, and sample sizes $n = 200, 500, 1000, 2000$.}
\label{resultados erro padrao NML todos cenarios}
\resizebox{\columnwidth}{!}{%
\begin{tabular}{cccccccccc}
\toprule
& & \multicolumn{8}{c}{Standard errors} \\[2pt]
 &   & \multicolumn{2}{c}{$n = 200$} &\multicolumn{2}{c}{$n = 500$} &\multicolumn{2}{c}{$n = 1000$} &\multicolumn{2}{c}{$n = 2000$} \\[2pt]
& &Empirical&Theoretical&Empirical&Theoretical&Empirical&Theoretical&Empirical&Theoretical\\[2pt]\midrule
& $\mu$ &0.0725 & 0.0726 & 0.0458 & 0.0463 & 0.0329 & 0.0328 & 0.0235 & 0.0233 \\[2pt]
$\kappa = 0.2$&$\sigma^2$&0.1418 & 0.2064 & 0.0939 & 0.1397 & 0.0728 & 0.1174 & 0.0558 & 0.1046 \\[2pt]
& $\kappa$&0.2159 & 0.5055 & 0.1724 & 0.3648 & 0.1461 & 0.3069 & 0.1244 & 0.2619\\[12pt]

& $\mu$ & 0.0731 & 0.0739 & 0.0470 & 0.0468 & 0.0330 & 0.0332 & 0.0237 & 0.0235 \\[2pt]
$\kappa = 0.3$&$\sigma^2$& 0.1450 & 0.1971 & 0.0957  & 0.1462  & 0.0694  & 0.1059  & 0.0532 & 0.0788 \\[2pt]
 & $\kappa$ & 0.2134 & 0.4644 & 0.1775 & 0.3593 & 0.1506 & 0.2730 & 0.1266 & 0.2079\\[12pt]

&$\mu$&0.0752 & 0.0746 & 0.0471 & 0.0474 & 0.0332 & 0.0335 &0.0239  &0.0237 \\[2pt]
$\kappa = 0.5$&$\sigma^2$&0.1358 & 0.1805 & 0.0868 & 0.1117 & 0.0614 & 0.0745&0.0437  & 0.0494\\[2pt]
&$\kappa$& 0.2079 & 0.3991 & 0.1785 & 0.2646 & 0.1466 & 0.1868& 0.1144 & 0.1287\\[12pt]

&$\mu$&0.0732 & 0.0744 & 0.0477 & 0.0472 & 0.0336 & 0.0334 & 0.0231 & 0.0236 \\[2pt]
$\kappa = 0.6$&$\sigma^2$&0.1318 & 0.1606 & 0.0840 & 0.0988 & 0.0586 & 0.0655 & 0.0404 & 0.0445 \\[2pt]
&$\kappa$&0.2026 & 0.3409 & 0.1702 & 0.2169 & 0.1362 & 0.1501 & 0.0986 & 0.1033\\[12pt]

&$\mu$&0.0733 & 0.0732 & 0.0462 & 0.0463 & 0.0325 & 0.0327 & 0.0233 & 0.0232 \\[2pt]
$\kappa = 0.8$&$\sigma^2$&0.1199 & 0.1428 & 0.0786 & 0.0874 & 0.0565 & 0.0613 & 0.0402 & 0.0433 \\[2pt]
&$\kappa$&0.1663 & 0.2610 & 0.1257 & 0.1499 & 0.0943 & 0.1029 & 0.0694 & 0.0717\\\bottomrule
\end{tabular}%
}
\end{table}

We conclude this subsection with an illustration of the asymptotic normality of the MM estimators established in Proposition \ref{MM_AN}. We present histograms of the standardized (mean-deviation) MM estimates obtained in the Monte Carlo replications along with the standard normal density curve in Figure \ref{hist_AN} for the cases $\kappa = 0.3$ and $\kappa = 0.8$ with sample sizes $n=200,500,1000,2000$. It can be seen that the normal approximation works satisfactorily and becomes better as the sample size increases. We also observed this behavior for the MM estimators of $\mu$ and $\sigma^2$ but we did not report them here to save space.

\begin{figure}
\center
\subfigure{\includegraphics[width=\linewidth, height = 0.27\columnwidth]{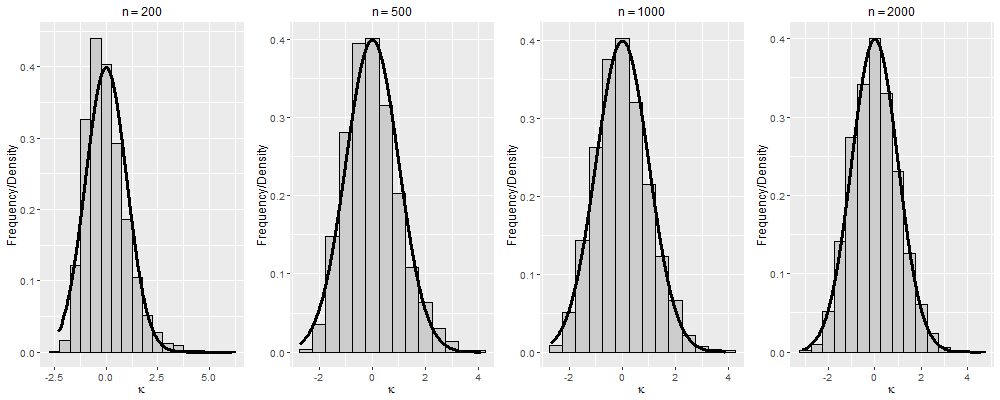}}\\
\subfigure{\includegraphics[width=\linewidth, height = 0.27\columnwidth]{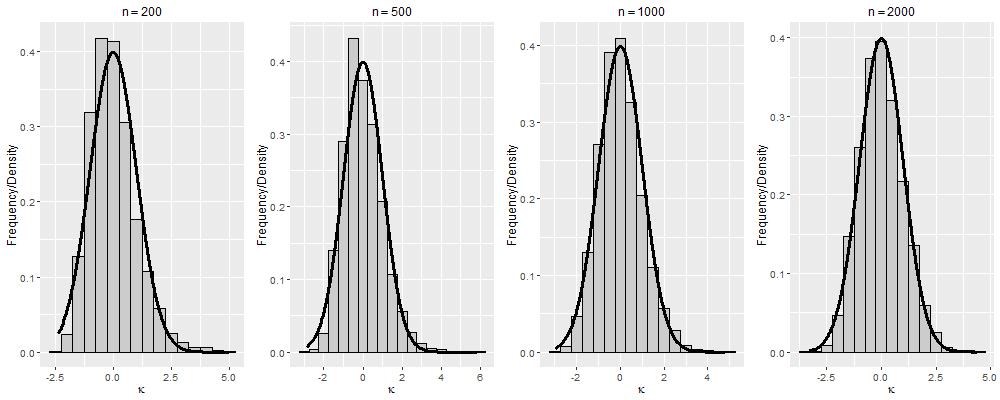}}
\caption{Histograms of the standardized MM estimates of $\kappa$ under the $\mathrm{NML}(\mu = 0.5, \sigma^2 = 1.0, \kappa)$ model for $\kappa=0.3$ in the top row and $\kappa=0.8$ in the bottom row, with sample sizes $n=200,500,1000,2000$.}
\label{hist_AN}
\end{figure}

\subsection{IBOVESPA data analysis}\label{sec:application}

We now illustrate the usefulness of the Normal-Mittag-Leffler distribution for modeling financial data. We study the daily log-returns of \texttt{IBOVESPA} (S\~ao Paulo Stock Exchange Index), which can be obtained from the website \url{https://finance.yahoo.com/}. The data consists of 2226 observations collected from Jan $1^{st}$, 2010 to Dec $31^{th}$, 2018. For a justification of why using limiting laws from random summations to model high-frequency financial data, we refer to \cite{st} and \cite{olietal2020}.

For comparison purposes, we also consider the well-known normal inverse-Gaussian (NIG) and normal gamma (NG) distributions with parameters $\mu\in\mathbb R$, $\sigma^2>0$, and $\phi>0$; we use here the parameterization considered in \cite{olietal2020}. For estimating their parameters, we consider the EM-algorithm developed by \cite{olietal2020}. Moreover, the limiting cases of the NML distribution, normal ($\kappa=1$) and Laplace ($\kappa=0$), are also considered in our application.

Table \ref{resultados ibovespa} presents the parameter estimates and their associated standard errors for the NML, normal, Laplace, NG, and NIG models for the daily log-returns of \texttt{IBOVESPA}. In Figure \ref{histogramIbovespaIndex}, we show the histogram of the \texttt{IBOVESPA} log-returns along with the normal, Laplace, NML, NG, and NIG fitted density functions. From this figure, we observe a satisfactory fit of our proposed NML distribution to the data. To confirm this, we compute the empirical first four cumulants and compare them to the fitted quantities according to the models considered in this application. These empirical and fitted cumulants are provided in Table \ref{Descriptive statistics Ibovespa}. Looking at this table, we see that all models captured well the empirical mean and variance, except for the estimated variance under the Laplace model. The empirical skewness is almost null, so the assumption of symmetry implicitly considered in the normal, Laplace, and NML laws is suitable. Moreover, the NG and NIG distributions estimated the skewness close to 0.

\begin{table}
\small
\centering
\vspace{0.5cm}
\caption{Parameter estimates and their standard errors (in parentheses) for the NML, normal, Laplace, NG, and NIG models for the daily log-returns of \texttt{IBOVESPA}.}
\label{resultados ibovespa}
\begin{tabular}{lccc}
\toprule
\multicolumn{1}{c}{Model} & $\mu$       &$\sigma^2$      & $\kappa$ or $\phi$ \\[2pt]\midrule
Normal &0.00021 (0.00030)    & 0.00020 (0.00001)  & 1 (-------)              \\[2pt]
Laplace&0.00021 (0.00012)    &0.00021 (0.00001)   & 0 (-------)              \\[2pt]
NML    &0.00021 (0.00030)    &0.00018 (0.00001)   & 0.49123 (0.00554)   \\[2pt]
NIG    &0.00021  (0.00030)   &0.00020 (0.00001)   & 2.09675 (0.41203)\\[2pt]
NG     &0.00021 (0.00030)    &0.00020 (0.00001)   &2.58371 (0.40600)\\\bottomrule
\end{tabular}
\end{table}

The major difference in the fitted models concerns the tails through the excess kurtosis. First, note that the normal and Laplace distributions have theoretical excess kurtosis equal to 0 and 3, respectively. These distributions are not adequate to model the tails in this financial data set. We observe that our NML distribution captured very well the excess kurtosis, providing better results than the well-used NIG and NG laws. It is important to emphasize the cruciality of modeling well the tails when dealing with financial data. This empirical illustration shows that the limiting distribution found in our paper can be useful for analyzing data when the excess kurtosis is between 0 and 3.

\begin{figure}[!htb]
\centering
\includegraphics[width=.8\linewidth]{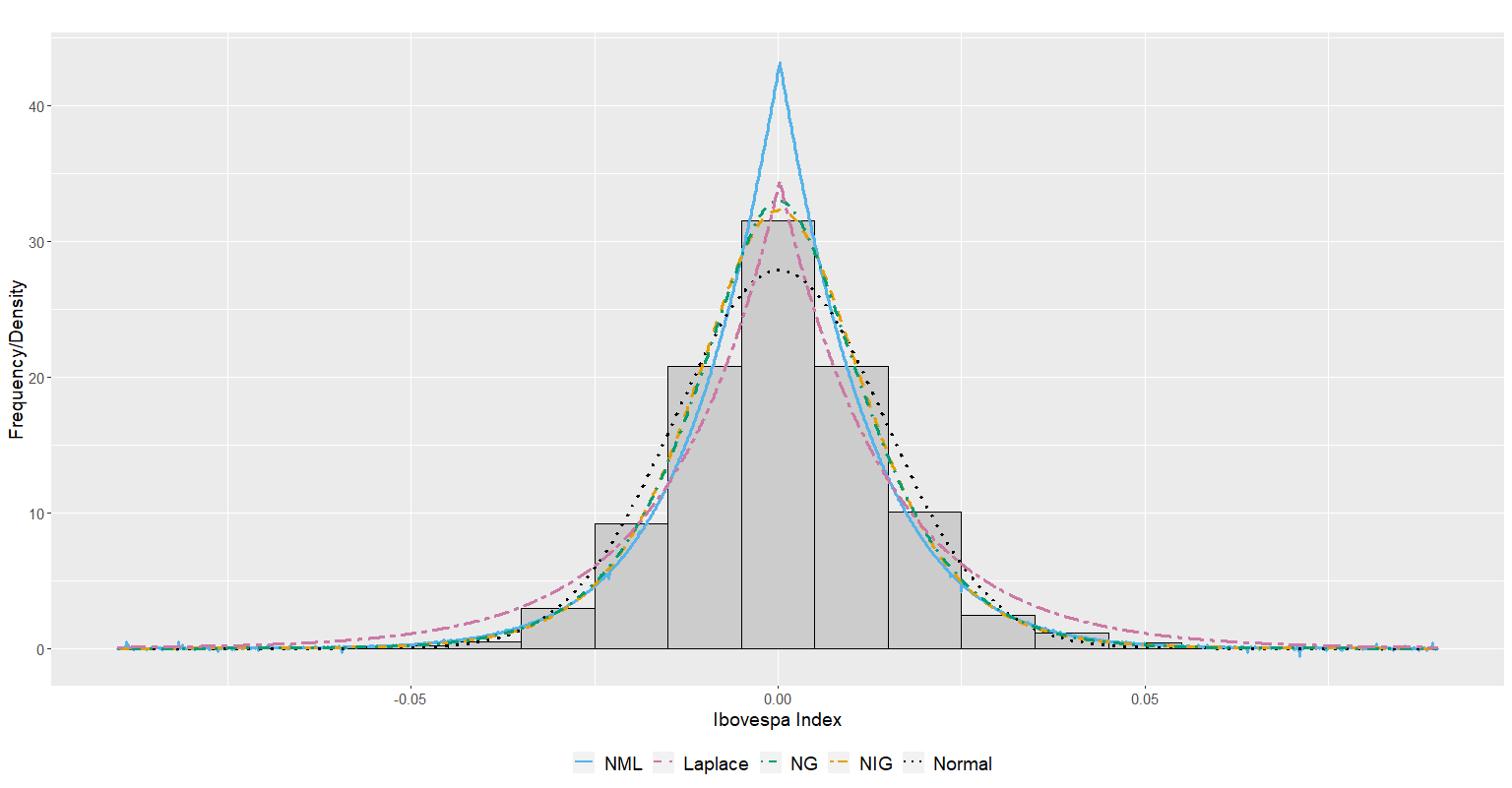}
\caption{Histogram of the daily log-returns of \texttt{IBOVESPA} along with the normal, Laplace, NML, NG, and NIG fitted density functions.}
\label{histogramIbovespaIndex}
\end{figure}

\begin{table}
\small
\centering
\caption{Empirical and fitted mean, variance, skewness, and excess kurtosis for the daily log-returns of \texttt{IBOVESPA} under the normal, Laplace, NML, NG, and NIG laws.}
\label{Descriptive statistics Ibovespa}
\begin{tabular}{lcccc}
\toprule
& Mean     &Variance & Skewness &Excess kurtosis    \\[2pt]\midrule
Normal    &0.00021 & 0.00020 & 0 & 0\\[2pt]
Laplace    &0.00021 & 0.00042 & 0 &3 \\[2pt]
NIG    &0.00021&0.00020  &0.05172 &1.35247\\[2pt]
NG    &0.00020 &0.00020 &0.02624 &0.47885\\[2pt]
NML   &0.00021 &0.00020  & 0 &1.74430 \\[2pt]\hdashline
Empirical & 0.00021 &0.00020 &$-$0.05782 &1.74700 \\[2pt]\bottomrule
\end{tabular}
\end{table}

\section{Related problems}\label{sec:relatedproblems}

In this section, we discuss two related problems to the FP summation: (1) the mixed Poisson representation of the FP distribution; (2) the weak limit of a Conway-Maxwell-Poisson random sum.

\subsection{Mixed Poisson representation for the FP law}

Proposition \ref{convergencia distribuicao padrao ML} provides, as a byproduct, the decomposition of an FP distribution as a mixture between the Poisson and (type 2) Mittag-Leffler distributions, which is stated in the next corollary. To the best of our knowledge, this property of the FP law is a new finding in the literature.

\begin{corollary}\label{MP_rep}
Let $N\sim{FP}(\nu,\kappa)$, $\nu>0$ and $\kappa\in(0,1]$. Then, $N$ satisfies the following mixed Poisson representation: $N|U\sim\mbox{Poisson}(\nu U)$, with $U\sim\mbox{ML}(\kappa)$.
\end{corollary}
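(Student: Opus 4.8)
The plan is to prove the claim by matching probability generating functions (pgf's), exploiting the fact that the pgf determines the distribution of a nonnegative integer-valued random variable uniquely.

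First I would record the pgf of $N\sim\mbox{FP}(\nu,\kappa)$. Setting $t=1$ in (\ref{pgf_fp}) gives $G_N(s)=\mathcal{E}_\kappa(\nu(s-1))$ for $s\in\mathbb R$. Next I would compute the pgf of the candidate mixed Poisson law. If $M$ denotes a random variable with $M\,|\,U\sim\mbox{Poisson}(\nu U)$ and $U\sim\mbox{ML}(\kappa)$, then conditioning on $U$ and using that the pgf of a $\mbox{Poisson}(\lambda)$ variable is $\exp(\lambda(s-1))$,
$$
E(s^M)=E\big(E(s^M\,|\,U)\big)=E\big(\exp\{\nu U(s-1)\}\big).
$$

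Now I would invoke the moment generating function of the (type 2) Mittag-Leffler distribution, namely $E(\exp(rU))=\mathcal{E}_\kappa(r)$ for $r\in\mathbb R$ (stated just before Proposition \ref{convergencia distribuicao padrao ML}), evaluated at $r=\nu(s-1)$. This yields $E(s^M)=\mathcal{E}_\kappa(\nu(s-1))=G_N(s)$ for every $s\in\mathbb R$. Since the two pgf's coincide on all of $\mathbb R$ (in particular on $[0,1]$), and the pgf characterizes the law of a nonnegative integer-valued random variable, I conclude $N\overset{d}{=}M$, which is precisely the asserted mixed Poisson representation.

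The argument is essentially a one-line generating-function identity once the ingredients are assembled, so there is no genuine obstacle here; the only points requiring a little care are confirming that the Mittag-Leffler mgf identity is applicable at the argument $\nu(s-1)$ (which is immediate, since that identity holds for all real arguments and $\nu(s-1)\le 0$ on the relevant range $s\in[0,1]$), and explicitly citing the uniqueness of the pgf to pass from equality of generating functions to equality in distribution. It is also worth noting that this corollary is really a distillation of the computation already carried out in the proof of Proposition \ref{convergencia distribuicao padrao ML}, where the same substitution $G_N(\varphi_{W_1}(\cdot))=\mathcal{E}_\kappa(\nu(\varphi_{W_1}(\cdot)-1))$ appears; here we simply read it at the level of the count variable $N$ rather than the normalized sum.
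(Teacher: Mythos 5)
Your proof is correct and is essentially the same argument the paper intends: the paper presents this corollary as a direct byproduct of Proposition \ref{convergencia distribuicao padrao ML}, i.e.\ the observation that the FP probability generating function $\mathcal{E}_{\kappa}\big(\nu(s-1)\big)$ from (\ref{pgf_fp}) coincides with $E\big(\exp\{\nu U(s-1)\}\big)$ for $U\sim\mbox{ML}(\kappa)$, which is precisely the generating-function identity you verify. Spelling out the uniqueness of the pgf is a reasonable added detail, but the route is the same.
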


We think that Corollary \ref{MP_rep} can be useful to study properties of the FP distribution and to implement alternative estimation procedures rather than the method of moments. In particular, a Monte Carlo Expectation-Maximization algorithm can be developed as an alternative to the direct maximization of the log-likelihood function, which is infeasible due to the complicated form of the FP probability function.

Moreover, by using Corollary \ref{MP_rep} and Definition 4.3 from \cite{gra1997}, we can construct an associated mixed Poisson process as follows. Let $\{N^\star(t)\}_{t\geq0}$ be a Poisson process with rate $\nu>0$, independent of $U\sim\mbox{ML}(\kappa)$. Define a new counting process by $\{\widetilde N(t)\}_{ t\geq0}$ by $\widetilde N(t)\equiv (N^\star \circ U) (t)\equiv N^\star (t U)$, for $t\geq0$. Then, $\{\widetilde N(t)\}_{t\geq0}$ is a mixed Poisson process with marginals FP distributed.

We believe that the points discussed in this subsection deserve further investigation. Another question of interest is to explore if the existing non-markovian FP process is related (in some sense) to the above mixed Poisson process.

\subsection{Conway-Maxwell-Poisson random sum}

We conclude this paper showing that not all generalized Poisson random sums yield a non-normal limiting distribution. We consider the  Conway-Maxwell-Poisson (COMP) distribution, also known as COM-Poisson, introduced by \cite{cm}. This distribution has received a lot of attention after its revival by \cite{smkbb}.

A random variable $K$ is COM-Poisson distributed if its probability mass function assumes the form
$$P(K = j) = \frac{\lambda^j}{(j!)^{\eta}}\frac{1}{H(\lambda, \eta)}, \quad j = 0, 1, 2, \cdots,$$
for $\lambda>0$ and $\eta > 0$, where $H(\lambda, \eta)$ is a normalizing constant given by
$$H(\lambda, \eta) = \sum\limits_{i = 0}^{\infty}\frac{\lambda^i}{(i!)^{\eta}}.$$
We denote $K \sim\mbox{COMP}(\lambda, \eta)$. 
The COM-Poisson law contains the Poisson distribution as a particular case when $\eta = 1$. The expected value, variance, and probability generating function of $K \sim\mbox{COMP}(\lambda, \eta)$ are (for instance, see \cite{dg}) respectively
\begin{eqnarray*}\label{expVar}
E(K) = \lambda\frac{d}{d\lambda}\bigg\{\log(H(\lambda, \eta))\bigg\}, \; \; \mbox{Var}(K) = \lambda\frac{d}{d\lambda}\bigg\{E(K)\bigg\}, \; \; G_{K}(t) = \frac{H(t\lambda, \eta)}{H(\lambda, \eta)},\,\, t\in\mathbb R.
\end{eqnarray*}

In \cite{g1}, it is shown that, for fixed $\eta$ and for large $\lambda$,
\begin{equation}\label{aproxCOMPoisson}
    H(\lambda, \eta) = \dfrac{\exp\{\eta\lambda^{1/\eta}\}}{\lambda^{\frac{\eta-1}{2\eta}}(2\pi)^{\frac{\eta-1}{2}}\sqrt{\eta}}(1+\mathcal{O}(\lambda^{-1/\eta})),
\end{equation}
and
\begin{eqnarray}\label{aproxExpVar}
E(K) \approx \lambda^{\frac{1}{\eta}} - \frac{\eta-1}{2\eta}.
\end{eqnarray}

The Conway-Maxwell-Poisson random sum is given by (\ref{random_sum}) with $N=K \sim\mbox{COMP}(\lambda, \eta)$. When $\eta=1$, we obtain the Poisson random summation as a particular case. In the next proposition, we show that a proper normalization of the COMP summation yields a normal limiting distribution.

\begin{proposition}\label{padrao_COMPoisson}
Let $\{X_n\}_{n \in \mathbb{N}}$ be a sequence of i.i.d. random variables with $E(X_1) = 0$ and $\mbox{Var}(X_1)=1$. Let $K \sim COMP(\lambda, \eta)$, independent of the $X_n's$. Then, 

\begin{equation*}
 \widetilde{S}_{\lambda} \equiv a_{\lambda}\sum\limits_{i = 1}^{K} X_i \overset{d}\longrightarrow N(0,1), \; \; as\, \lambda \to \infty,
\end{equation*}
where $a_{\lambda} = \lambda^{-\frac{1}{2\eta}}.$
\end{proposition}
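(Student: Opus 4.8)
The plan is to follow the blueprint of Proposition~\ref{convergencia distribuicao padrao ML}: establish convergence of characteristic functions and invoke L\'evy's Continuity Theorem. Writing $S_\lambda \equiv \sum_{i=1}^K X_i$ and conditioning on $K$, the independence of $K$ and the $X_i$'s together with the probability generating function $G_K(t) = H(t\lambda,\eta)/H(\lambda,\eta)$ yields
$$\varphi_{\widetilde{S}_\lambda}(s) = E\left(\left[\varphi_{X_1}(s\,a_\lambda)\right]^K\right) = \frac{H\!\left(\lambda\,\varphi_{X_1}(s\,a_\lambda),\,\eta\right)}{H(\lambda,\eta)}, \qquad s\in\mathbb R.$$
Before taking limits I would justify the normalization exactly as in the fractional Poisson case: $E(S_\lambda)=E(K)E(X_1)=0$ and $\mathrm{Var}(S_\lambda)=E(\mathrm{Var}(S_\lambda\mid K))=E(K)\approx \lambda^{1/\eta}$ by \eqref{aproxExpVar}, so that $a_\lambda=\lambda^{-1/(2\eta)}$ is precisely the rate producing a non-degenerate limit.

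The core step is the asymptotic analysis of this ratio. Writing $t_\lambda \equiv \varphi_{X_1}(s\,a_\lambda)$ and substituting the expansion \eqref{aproxCOMPoisson} for both $H(\lambda t_\lambda,\eta)$ and $H(\lambda,\eta)$, the $\sqrt\eta$ and $(2\pi)$-factors cancel, and the surviving polynomial factor $t_\lambda^{-(\eta-1)/(2\eta)}$ together with the two $\mathcal{O}(\cdot)$ correction factors all tend to $1$, since $t_\lambda\to\varphi_{X_1}(0)=1$ as $a_\lambda\to0$. This leaves
$$\varphi_{\widetilde{S}_\lambda}(s) = \exp\!\left\{\eta\,\lambda^{1/\eta}\left(t_\lambda^{1/\eta}-1\right)\right\}\bigl(1+o(1)\bigr).$$
To evaluate the exponent I would Taylor-expand the characteristic function, using $E(X_1)=0$ and $E(X_1^2)=1$, to get $t_\lambda - 1 = -\tfrac{s^2}{2}a_\lambda^2 + o(a_\lambda^2) = -\tfrac{s^2}{2}\lambda^{-1/\eta}+o(\lambda^{-1/\eta})$, and then $t_\lambda^{1/\eta}-1 = \tfrac{1}{\eta}(t_\lambda-1)+O\!\left((t_\lambda-1)^2\right)$. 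Multiplying by $\eta\lambda^{1/\eta}$, the linear term gives $\lambda^{1/\eta}(t_\lambda-1)\to -s^2/2$, while the quadratic remainder contributes $\eta\lambda^{1/\eta}\,O(\lambda^{-2/\eta})=O(\lambda^{-1/\eta})\to0$. Hence $\varphi_{\widetilde{S}_\lambda}(s)\to e^{-s^2/2}$ for every $s$, which is the characteristic function of $N(0,1)$, and the conclusion follows from L\'evy's theorem.

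The delicate point---and the step I expect to require the most care---is that $t_\lambda=\varphi_{X_1}(s\,a_\lambda)$ is complex-valued (the $X_i$ are not assumed symmetric), so the argument $\lambda t_\lambda$ inserted into \eqref{aproxCOMPoisson} is complex, whereas that expansion is naturally stated for real $\lambda$. Since $H(\cdot,\eta)$ is entire, its leading large-argument asymptotics (obtained by a saddle-point analysis as in \cite{g1}) extend to complex arguments throughout a sector about the positive real axis, with the principal branch of $z^{1/\eta}$; because $t_\lambda\to1$ we have $\arg(\lambda t_\lambda)=\arg(t_\lambda)\to0$, so $\lambda t_\lambda$ eventually lies in that sector and the leading term of \eqref{aproxCOMPoisson} applies with the same form. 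I would make this rigorous by verifying that the relative error $\mathcal{O}((\lambda t_\lambda)^{-1/\eta})$ remains uniform as $t_\lambda\to1$; once this analytic continuation of the asymptotics is secured, the cancellation of prefactors and the control of the $o(1)$ terms are routine.
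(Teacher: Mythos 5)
Your proof is correct and follows essentially the same route as the paper's: both write $\varphi_{\widetilde S_\lambda}(s)=H\big(\lambda\varphi_{X_1}(s a_\lambda),\eta\big)/H(\lambda,\eta)$ via the probability generating function, substitute the expansion \eqref{aproxCOMPoisson}, observe that the surviving prefactor $\varphi_{X_1}^{-(\eta-1)/(2\eta)}(s a_\lambda)\to 1$, show the exponent tends to $-s^2/2$, and conclude by L\'evy's continuity theorem. Your two deviations are refinements rather than a different approach: you compute the exponent limit by Taylor expansion of $\varphi_{X_1}$ where the paper applies L'H\^opital's rule twice to the complex-valued function $\varphi_{X_1}$ (a step that strictly speaking needs justification, which your expansion avoids), and you explicitly flag that \eqref{aproxCOMPoisson} is being invoked at the complex argument $\lambda\varphi_{X_1}(s a_\lambda)$, a subtlety the paper's proof passes over in silence.
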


\begin{proof}
Let $S_{\lambda}=\sum\limits_{i = 1}^{K} X_i$ and observe that $E(S_{\lambda}) = E(K)E(X_1) = 0$. Also, with the help of (\ref{aproxExpVar}), we get $\mbox{Var}(S_{\lambda}) = \mbox{Var}(X_1)E(K) + E^2(X_1)\mbox{Var}(K) \approx \lambda^{\frac{1}{\eta}} - \frac{\eta-1}{2\eta}$, when $\lambda$ is large. Therefore, $\mbox{Var}(S_{\lambda}) = \mathcal O(\lambda^{1/\eta})$, which indicates we must choose $a_{\lambda}=\lambda^{-\frac{1}{2\eta}}$ as the proper normalization.

The characteristic function of the random variable $\widetilde{S}_{\lambda}$ can be written as
\begin{eqnarray*}
\varphi_{\widetilde{S}_{\lambda}}(t) = \varphi_{S_{\lambda}}(a_{\lambda}t)
= G_K\left(\varphi_{X_1}(a_{\lambda}t)\right)
= \frac{H\left(\varphi_{{X_1}}(a_{\lambda}t)\lambda, \eta\right)}{H\left(\lambda, \eta\right)},
\end{eqnarray*}
with $G_K(\cdot)$ and $\varphi_{{X_1}}(\cdot)$ being the probability generating function of $K$ and characteristic function of $X_1$, respectively. Taking $\lambda\rightarrow\infty$ and using (\ref{aproxCOMPoisson}), we obtain
\begin{eqnarray}\label{lim_aux3}
\nonumber \lim\limits_{\lambda \to \infty} \varphi_{\widetilde{S}_{\lambda}}(t) &=& \lim\limits_{\lambda \to \infty} \dfrac{\exp\left\{\eta\lambda^{1/\eta}\left[\varphi^{1/\eta}_{{X_1}}(a_{\lambda}t) - 1\right] \right\}}{\varphi_{{X_1}}^{(\eta - 1)/2\eta}\left(a_{\lambda t}\right)}
= \nonumber \dfrac{\exp\left\{\lim\limits_{\lambda \to \infty}\eta\lambda^{1/\eta}\left[\varphi^{1/\eta}_{{X_1}}(a_{\lambda}t) - 1\right] \right\}}{\lim\limits_{\lambda \to \infty} \varphi_{{X_1}}^{(\eta - 1)/2\eta}\left(a_{\lambda t}\right)}\\
&=&\exp\left\{\lim\limits_{\lambda \to \infty}\frac{\eta\left[\varphi^{1/\eta}_{{X_1}}(a_{\lambda}t) - 1\right] }{\lambda^{-1/\eta}}\right\}.
\end{eqnarray}

Note that $\lim\limits_{\lambda \to \infty} \varphi^{1/\eta}_{{X_1}}(a_{\lambda}t) = 1$ and, therefore, the limit in (\ref{lim_aux3}) has the indeterminate form ``0/0". Apply L'H\^opital's rule in (\ref{lim_aux3}) to obtain that
\begin{eqnarray}\label{lim_aux4}
 \lim\limits_{\lambda \to \infty}\frac{\eta\left[\varphi^{1/\eta}_{{X_1}}(a_{\lambda}t) - 1\right] }{\lambda^{-1/\eta}} &=& \frac{t}{2}\lim\limits_{\lambda \to \infty}\frac{\varphi^{1/\eta-1}_{{X_1}}(\lambda^{-\frac{1}{2\eta}}t)\varphi'_{X_1}(\lambda^{-\frac{1}{2\eta}}t)}{\lambda^{-\frac{1}{2\eta}}},
\end{eqnarray}
where $\varphi'_{X_1}(x)=d\varphi_{X_1}(x)/dx$. Note that Expression (\ref{lim_aux4}) has again the indeterminate form ``0/0" since $\varphi'_{X_1}(0)=iE({X_1})=0$. A second application of the L'H\^opital's rule gives us that
\begin{eqnarray}\label{lim_aux5}
&&\displaystyle\lim_{\lambda \to \infty}\frac{\varphi^{1/\eta-1}_{{X_1}}(\lambda^{-\frac{1}{2\eta}}t)\varphi'_{X_1}(\lambda^{-\frac{1}{2\eta}}t)}{\lambda^{-\frac{1}{2\eta}}}=\nonumber\\
\nonumber&& \lim\limits_{\lambda \to \infty}\left\{\left(1/\eta - 1\right)\varphi^{1/\eta-2}_{{X_1}}(\lambda^{-\frac{1}{2\eta}}t)\left[\varphi'_{{X_1}}(\lambda^{-\frac{1}{2\eta}}t)\right]^2t + \varphi^{1/\eta-1}_{{X_1}}(\lambda^{-\frac{1}{2\eta}}t)\varphi''_{{X_1}}(\lambda^{-\frac{1}{2\eta}}t)t\right\}=\\
\nonumber&&\left(1/\eta - 1\right)\varphi^{1/\eta-2}_{{X_1}}(0)\left[\varphi'_{{X_1}}(0)\right]^2t + \varphi^{1/\eta-1}_{{X_1}}(0)\varphi''_{{X_1}}(0)t=0+i^2tE(X_1^2)=-t,
\end{eqnarray}
where $\varphi''_{X_1}(x)=d^2\varphi_{X_1}(x)/dx^2$. Hence, 
\begin{eqnarray}
\nonumber \lim\limits_{\lambda \to \infty} \varphi_{\widetilde{S}_{\lambda}}(t)= \exp\left\{-\frac{t^2}{2}\right\},\quad \forall s\in\mathbb R,
\end{eqnarray}
and the proof is completed by applying the  L\'evy's Continuity Theorem.
\end{proof}

Proposition \ref{padrao_COMPoisson} gives us an example in which a generalized Poisson summation yields a normal limiting distribution when properly normalized, in contrast with the mixed Poisson sums, where the limit is a normal mean-variance mixture.

\section*{Acknowledgements}
\noindent G. Oliveira thanks the partial financial support from {\it Coordena\c c\~ao de Aperfei\c coamento de Pessoal de N\'ivel Superior} (CAPES-Brazil). W. Barreto-Souza acknowledges support for his research from the KAUST Research Fund and NIH 1R01EB028753-01.

\section*{Appendix}

The elements of the gradient function $\nabla g$ associated to the function in (\ref{funcao g}) are given by
$$\nabla g(x, y, z) = 
\begin{pmatrix}
\dfrac{\partial g_1(x, y, z)}{\partial x}&\dfrac{\partial g_1(x, y, z)}{\partial y}&\dfrac{\partial g_1(x, y, z)}{\partial z}\\
\dfrac{\partial g_2(x, y, z)}{\partial x}&\dfrac{\partial g_2(x, y, z)}{\partial y}&\dfrac{\partial g_2(x, y, z)}{\partial z}\\
\dfrac{\partial g_3(x, y, z)}{\partial x}&\dfrac{\partial g_3(x, y, z)}{\partial y}&\dfrac{\partial g_3(x, y, z)}{\partial z}
\end{pmatrix} = 
\begin{pmatrix}
1&0&0\\
\nabla g_{21}&\nabla g_{22}&\nabla g_{23}\\
\nabla g_{31}&\nabla g_{32}&\nabla g_{33}
\end{pmatrix},
$$
where
\begin{eqnarray*}
&&\nabla g_{21} = -2x\Gamma\Big(h^{-1}(\omega) + 1\Big) + (y - x^2)\Gamma'\Big(h^{-1}(\omega) + 1\Big)\dfrac{d}{dx}\left\{h^{-1}(\omega)\right\},\\
&&\nabla g_{22} = \Gamma\Big(h^{-1}(\omega) + 1\Big) + (y - x^2)\Gamma'\Big(h^{-1}(\omega) + 1\Big)\dfrac{d}{dy}\left\{h^{-1}(\omega)\right\},\\
&&\nabla g_{23} = (y - x^2)\Gamma'\Big(h^{-1}(\omega) + 1\Big)\dfrac{d}{dz}\left\{h^{-1}(\omega)\right\},\quad
\nabla g_{31} = \dfrac{d}{dx}\left\{h^{-1}(\omega)\right\}, \\
&&\nabla g_{32} = \dfrac{d}{dy}\left\{h^{-1}(\omega)\right\},\quad
\nabla g_{33} = \dfrac{d}{dz}\left\{h^{-1}(\omega)\right\},
\end{eqnarray*}
for $\omega = \dfrac{z - 6x^2y + 5x^4}{6(y - x^2)^2}$, $(x,y,z)\in\mathbb R^3$, and
\begin{eqnarray*}
&&\Gamma'\Big(h^{-1}(\omega) + 1\Big) = \Gamma\Big(h^{-1}(\omega) + 1\Big)\Psi\Big(h^{-1}(\omega) + 1\Big),\\
&&\dfrac{d}{dx}\left\{h^{-1}(\omega)\right\} = \dfrac{1}{h'\Big(h^{-1}(\omega)\Big)}\left(\dfrac{4x^3y - 6xy^2 + 2xz)}{3(y - x^2)^3}\right),\\
&&\dfrac{d}{dy}\left\{h^{-1}(\omega)\right\} = \dfrac{1}{h'\Big(h^{-1}(\omega)\Big)}\left(\dfrac{-2x^4 + 3x^2y - z}{3(y - x^2)^3}\right),\\ 
&&\dfrac{d}{dz}\left\{h^{-1}(\omega)\right\} = \dfrac{1}{h'\Big(h^{-1}(\omega)\Big)6(y - x^2)^2},\\
&&h'\Big(h^{-1}(\omega)\Big) = \dfrac{2\Gamma^2\Big(h^{-1}(\omega) + 1\Big)\Big[\Psi\Big(h^{-1}(\omega) + 1\Big) - \Psi\Big(2h^{-1}(\omega) + 1\Big)\Big]}{\Gamma\Big(2h^{-1}(\omega) + 1\Big)},
\end{eqnarray*}
with $\Psi(\cdot)$ being the digamma function.

\bibliographystyle{spbasic}

\end{document}